\newcommand\blfootnote[1]{%
  \begingroup
  \renewcommand\thefootnote{}\footnote{#1}%
  \addtocounter{footnote}{-1}%
  \endgroup
}
\newtheorem{theorem}{Theorem}[section]
\newtheorem{lemma}[theorem]{Lemma}
\newtheorem{corollary}[theorem]{Corollary}
\DeclareMathOperator{\Aut}{Aut}
\DeclareMathOperator{\Cay}{Cay}
\DeclareMathOperator{\ev}{ev}
\DeclareMathOperator{\tr}{tr}
\DeclareMathOperator{\spec}{sp}
\def\Cay{\mbox{\rm Cay}}
\def\x{\mbox{\boldmath $x$}}
\def\vecrho{\mbox{\boldmath $\rho$}}
\def\vecphi{\mbox{\boldmath $\phi$}}
\def\vec0{\mbox{\boldmath $0$}}
\def\A{\mbox{\boldmath $A$}}
\def\B{\mbox{\boldmath $B$}}
\def\CC{\mbox{\boldmath $C$}}
\def\D{\mbox{\boldmath $D$}}
\def\G{\Gamma}
\def\U{\mbox{\boldmath $U$}}
\def\C{\mathbb C}
\def\G{\Gamma}
\begin{document}

\title{The spectra of lifted digraphs
\thanks{Research of the first two authors is supported by MINECO under project MTM2014-60127-P, and by AGAUR under project 2014SGR1147.
The third author acknowledges support from the research grants APVV 0136/12,  APVV-15-0220, VEGA 1/0026/16, and VEGA 1/0142/17. }}
\author{C. Dalf\'o\footnote{Departament de Matem\`atiques, Universitat Polit\`ecnica de Catalunya, Barcelona, Catalonia, {\tt{cristina.dalfo@upc.edu}}},
M. A. Fiol\footnote{Departament de Matem\`atiques, Universitat Polit\`ecnica de Catalunya; and Barcelona Graduate School of Mathematics, Barcelona, Catalonia, {\tt{miguel.angel.fiol@upc.edu}}}, J. \v{S}ir\'a\v{n}\footnote{Department of Mathematics and Statistics,
The Open University, Milton Keynes, UK; and
Department of Mathematics and Descriptive Geometry,
Slovak University of Technology, Bratislava, Slovak Republic,
{\tt {j.siran@open.ac.uk}}}
}


\maketitle

\blfootnote{
\begin{minipage}[l]{0.3\textwidth} \includegraphics[trim=10cm 6cm 10cm 5cm,clip,scale=0.15]{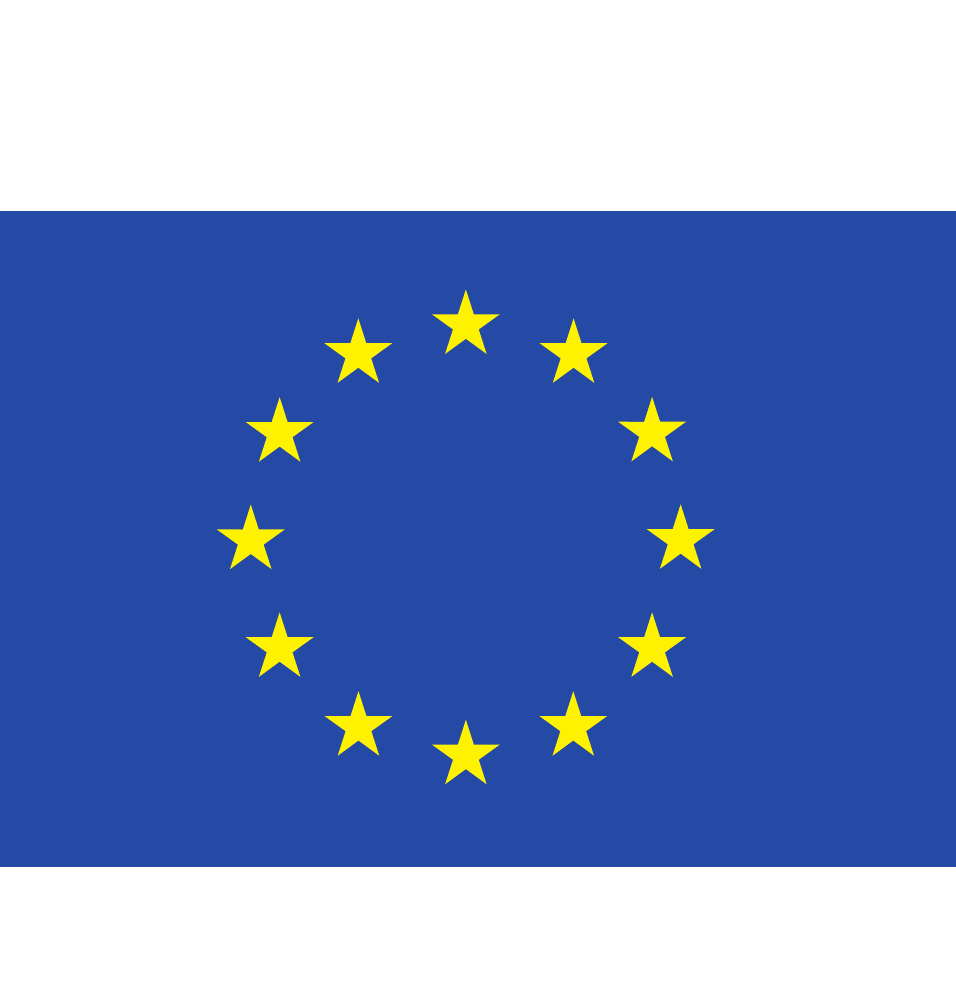} \end{minipage}  \hspace{-2cm} \begin{minipage}[l][1cm]{0.79\textwidth}
   The first author has also received funding from the European Union's Horizon 2020 research and innovation programme under the Marie Sk\l{}odowska-Curie grant agreement No 734922.
  \end{minipage}}

\begin{abstract}
We present a method to derive the complete spectrum of the lift $\Gamma^\alpha$ of a base digraph $\Gamma$, with voltage assignments on a (finite) group $G$.
The method is based on assigning to $\Gamma$  a quotient-like matrix whose entries are elements of the group algebra $\C[G]$, which fully represents $\Gamma^{\alpha}$. This allows us to derive the eigenvectors and eigenvalues of the lift in terms of those of the base digraph and the irreducible characters of $G$.
Thus, our main theorem generalize some previous results of Lov\'az and Babai concerning the spectra of Cayley digraphs.
\end{abstract}

\noindent{\em Keywords:} Digraph, adjacency matrix, regular partition, quotient digraph, spectrum, lifted digraph.

\noindent{\em Mathematics Subject Classifications:} 05C20; 05C50; 15A18.

\section{Preliminaries}

As it is well-known, the spectrum of a graph $\G$ (that is, the eigenvalues of its adjacency matrix) is an important invariant that gives us interesting results about its combinatorial properties, see e.g. the classic textbook of  Cvetkovi\'c, Doob, and Sachs \cite{cds95}.
Thus many efforts have been devoted to derive the spectrum (totally or partially) of some interesting families of graphs.
In this framework, Lov\'asz \cite{l75} provided
a formula which expresses the eigenvalues of $\G$ in terms of the characters of a transitive subgroup of its automorphism group $\Aut \G$. In the particular case of Cayley graphs (when the automorphism group contains a subgroup $G$ acting regularly on the vertices),
 Babai \cite{ba79} derived a more handy formula is by different methods. In fact, this formula also holds true for digraphs and arc-colored Cayley graphs. Following these works, here we deal with a more general family of (di)graphs, which are obtained as a type of compounding between a `base digraph' and a Cayley digraph. Our study not only gives the complete spectrum of the obtained digraphs (called `lifts'), but also shows how to compute the corresponding eigenvectors.

 Through this paper, $\Gamma=(V,E)$ denotes a digraph, with vertex set $V$ and arc set $E$. An arc from vertex $u$ to vertex $v$ is denoted by either $(u,v)$, $uv$, or $u\rightarrow v$. We allow {\em loops} (that is, arcs from a vertex to itself), and {\em multiple arcs}.
The spectrum of $\Gamma$, denoted by $\spec \Gamma=\{\lambda_0^{m_0},\lambda_1^{m_1},\ldots,\lambda_d^{m_d}\}$, is constituted by the distinct eigenvalues $\lambda_i$ with the corresponding algebraic multiplicities $m_i$, for $i\in [0,1]$, of its adjacency matrix $\A$. (Throughout the paper, for some integer $n$, we use the notation $[1,n]$ for the set $\{1,\dots,n\}$.)

The paper is organized as follows.  In the rest of this section we recall the definition of a voltage digraph and its lift. Then, we introduce a representation of the
lifted digraph with a quotient-like matrix whose size equals the order of the (much smaller) base digraph. In particular, it is shown that such a matrix can be used to deduce combinatorial properties of the lifted digraph. Following this approach, and as a main result, Section \ref{sec:sp} present a new method to completely determine the spectrum of the lift by using only the spectrum of the quotient-like matrix and the (irreducible) characters of the group. The results are illustrated by following some examples.

For the concepts and/or results about graphs and digraphs not presented here, we refer the reader to some of the basic textbooks on the subject; for instance, Bang-Jensen and Gutin \cite{bg09},  or Diestel~\cite{d10}.

\subsection{Voltage and lifted digraphs}
\label{sec:voltage}

Voltage (di)graphs are, in fact, a type of compounding that consists of connecting
together several copies of a (di)graph by setting some (directed) edges between any
two copies.  More precisely,
let $\G$ be a digraph with vertex set $V=V(\G)$ and arc set $E=E(\G)$.
Then, given a group $G$ with generating set $\Delta$, a
{\em voltage assignment} of $\G$ is a mapping $\alpha:E\rightarrow \Delta$. The pair $(\G,\alpha)$ is often called a {\em voltage digraph}. The {\em lifted digraph} (or, simply, {\em lift})
$\Gamma^\alpha$ is the digraph with vertex set  $V(\Gamma^\alpha)=V\times G$ and
arc set $E(\Gamma^\alpha)=E\times G$, where there is an arc from vertex $(u,g)$ to vertex $(v,h)$ if and only if $uv\in E$ and $h=g\alpha(uv)$.
For example, Figure \ref{fig1}$(b)$ shows the lifted digraph $\G^{\alpha}$ for the base digraph $\G=K_2^*$ with voltages shown in Figure \ref{fig1}$(a)$. More precisely, $\G$ is a complete graph on two vertices $V(\G)=\{a,b\}$, with additional arcs $ab$, $ba$, $aa$, $bb$, and  voltages $\alpha(aa)=\alpha(bb)=\sigma$ and $\alpha(ab)=\alpha(ba)=\rho$,  on the group $G=S_3\cong D_3=\langle \rho, \sigma| \rho^3=\sigma^2=(\rho\sigma)^2=\iota\rangle$.
Notice that because of the group role, the symmetry of the obtained lifts usually yields  digraphs with large automorphism groups.
In particular, when $\G$ is a singleton with loops, the lift is the Cayley digraph $\Cay(G,\Delta)$.
For more information, see  the authors' paper \cite{dfmrs17}, or the comprehensive survey of Miller and \v{S}ir\'{a}\v{n}~\cite{ms}.

\subsection{A matrix representation of the lift}
Let us see how we can fully represent a lifted digraph with a matrix whose size equals the order of the base digraph. This approach was initiated by the auhtors, together with  Miller and Ryan, in \cite{dfmrs17}.
Let $\G=(V,E)$ be a digraph with voltage assignment $\alpha$ on the group $G$. Its
\emph{associated matrix} $\B$ is a square matrix indexed by the vertices of $\G$, and whose entries are elements of the group algebra $\C[G]$. Namely,
$$
(\B)_{uv}=\sum_{g\in G} \alpha_g g
$$
where
$$
\alpha_i=\left\{
\begin{array}{ll}
1 & \mbox{if  $uv\in E$ and $\alpha(uv)=g$,}\\
0 & \mbox{otherwise,}
\end{array}
\right.
$$
for $i\in [1,n]$. The following result was given in \cite{dfmrs17}.
\begin{lemma}
\label{lema-walks}
Let $(\B^{\ell})_{uv}=\sum_{g\in G} a_g^{(\ell)} g$. Then, for every $g,h\in G$,
the coefficient $a_g^{(\ell)}$ equals the number of walks of length $\ell$ in the lifted digraph $\G^{\alpha}$,  from vertex $(u,h)$ to vertex $(v,hg)$.
In particular, if $u=v$ and $\iota$ denotes the identity element of $G$,
$a_{\iota}^{(\ell)}$ is the number of walks of length $\ell$ rooted at every vertex $(u,g)$, for $g\in G$, of the lift.
\end{lemma}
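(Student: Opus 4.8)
The plan is to prove the statement by induction on the walk length $\ell$, exploiting the way matrix multiplication in $M_n(\C[G])$ mirrors the concatenation of walks in the lift. First I would set up the base case $\ell=1$: by definition of $\B$, the entry $(\B)_{uv}=\sum_{g\in G}\alpha_g g$ has $\alpha_g=1$ precisely when $uv\in E$ and $\alpha(uv)=g$, and in the lift there is an arc from $(u,h)$ to $(v,hg)$ exactly under the same condition, so $a_g^{(1)}$ counts the (at most one, or more if multiple arcs are allowed) arcs as required. I should be a little careful here to phrase the multiple-arcs case correctly, since the displayed definition of $\alpha_g$ as written only yields $0$ or $1$; the natural reading is that with multiplicities the coefficient is the number of arcs $u\to v$ with voltage $g$, and the lift has exactly that many arcs from $(u,h)$ to $(v,hg)$, so the base case still holds verbatim.

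For the inductive step, assume the claim for $\ell$ and write $\B^{\ell+1}=\B^{\ell}\B$. Then
\[
(\B^{\ell+1})_{uv}=\sum_{w\in V}(\B^{\ell})_{uw}(\B)_{wv}
=\sum_{w\in V}\Bigl(\sum_{g'\in G}a_{g'}^{(\ell)}(u,w)\,g'\Bigr)\Bigl(\sum_{g''\in G}\alpha_{g''}(w,v)\,g''\Bigr).
\]
Collecting the coefficient of a fixed $g\in G$ on the right-hand side gives
\[
a_g^{(\ell+1)}(u,v)=\sum_{w\in V}\ \sum_{\substack{g',g''\in G\\ g'g''=g}} a_{g'}^{(\ell)}(u,w)\,\alpha_{g''}(w,v).
\]
The key step is then to read this identity combinatorially: a walk of length $\ell+1$ in $\G^{\alpha}$ from $(u,h)$ to $(v,hg)$ decomposes uniquely as a walk of length $\ell$ from $(u,h)$ to some intermediate vertex $(w,hg')$ followed by a single arc from $(w,hg')$ to $(v,hg)$. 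By the induction hypothesis the number of such initial segments is $a_{g'}^{(\ell)}(u,w)$, and by the base case the number of terminating arcs is the number of arcs $w\to v$ in $\G$ with voltage $g''$ where $hg'\cdot g''=hg$, i.e.\ $g'g''=g$; summing over $w$ and over the factorizations $g'g''=g$ reproduces exactly the formula above. This establishes the inductive step, and the ``in particular'' assertion about $a_\iota^{(\ell)}$ follows immediately by taking $u=v$, $g=\iota$, noting the count is independent of the base point $h$.

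I expect the main (minor) obstacle to be purely bookkeeping: making the bijection between length-$(\ell+1)$ walks and (length-$\ell$ walk, arc) pairs fully rigorous requires being explicit that the second-coordinate labels are forced by the first-coordinate walk together with the accumulated voltages, so that the intermediate vertex is determined by $w$ and $g'$ alone and no overcounting occurs. A secondary point worth a sentence is the independence of the count on $h$: left-translation by $h$ is an automorphism of $\G^{\alpha}$ sending $(x,g)\mapsto(x,hg)$, which makes the ``rooted at every vertex'' phrasing legitimate. None of this is deep, so the proof is essentially the observation that $\B$ is the ``voltage-weighted adjacency matrix'' and its powers track walks with their net voltage, exactly as ordinary adjacency-matrix powers count walks.
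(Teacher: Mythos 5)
Your proof is correct. Note that the paper itself gives no argument for this lemma --- it is quoted from the earlier work \cite{dfmrs17} --- but your induction on $\ell$, matching the convolution product in $\C[G]$ with the unique decomposition of a length-$(\ell+1)$ walk in the lift into a length-$\ell$ walk plus a terminal arc, is the standard and essentially inevitable argument, and you handle the two delicate points (multiple arcs in the base case, and independence of the count from the base element $h$ via left translation) correctly.
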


\subsection{Some theoretical background}

In our study we use representation theory of finite groups. For basics on representation theory and character tables of a group, see for instance, Burrow \cite{b93}.

We also  need to recall the following known result (see e.g.  Gould \cite{go99}.).

\begin{lemma}
\label{gould}
If the power sums $s_k=\sum_{i=1}^d z_i^k$ of some complex numbers $z_1,\ldots,z_d$ are known for every $k=1,\ldots,d$, then such numbers are
the roots of the monic polynomial
$$
p(z)=\frac{1}{d!}\det \CC(z),
$$
where $\CC(z)$ is the following matrix of dimension $d+1$:
$$
\CC(z)=
\left(
\begin{array}{cccccccc}
z^d & z^{d-1} & z^{d-2} & z^{d-3} & \cdots & z^2 & z & 1 \\
s_1 & 1       &   0     &    0    &  \cdots & 0 & 0 & 0 \\
s_2 & s_1     &   2     &    0    &  \cdots & 0 & 0 & 0 \\
s_3 & s_2     &   s_1   &    3    &  \cdots & 0 & 0 & 0 \\
\vdots & \vdots     &   \vdots   &    \vdots    &  \ddots & \vdots & \vdots & \vdots \\
s_{d-1} & s_{d-2}     &   s_{d-3}   &    s_{d-4}    &  \cdots & S_1 & d-1 & 0 \\
s_d     & s_{d-1} & s_{d-2}     &   s_{d-3}   &     \cdots & s_2 & s_1 & d \\
\end{array}
\right).
$$
\end{lemma}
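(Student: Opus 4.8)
The plan is to reconstruct the numbers $z_1,\dots,z_d$ from their power sums via the Newton--Girard identities, and then show that the given determinant is, up to the factor $d!$, exactly the monic polynomial $\prod_{i=1}^d(z-z_i)$. Recall that if we write $p(z)=z^d - e_1 z^{d-1} + e_2 z^{d-2} - \cdots + (-1)^d e_d$, where $e_k$ is the $k$th elementary symmetric function of the $z_i$, then Newton's identities read $s_k - e_1 s_{k-1} + e_2 s_{k-2} - \cdots + (-1)^{k-1} e_{k-1} s_1 + (-1)^k k\, e_k = 0$ for $1\le k\le d$. This is a linear triangular system expressing $e_1,\dots,e_d$ in terms of $s_1,\dots,s_d$, and since the $s_k$ are assumed known for all $k\le d$, the $e_k$ (hence $p$) are uniquely determined; that $z_1,\dots,z_d$ are precisely the roots of this $p$ is immediate from the definition of $p$ as $\prod(z-z_i)$.

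The substantive step is to identify $\frac{1}{d!}\det\CC(z)$ with this $p(z)$. First I would expand $\det\CC(z)$ along the first row, which has entries $z^d, z^{d-1},\dots,z,1$; the cofactor of $z^{d-k}$ is (up to sign) the determinant of the $d\times d$ matrix obtained by deleting row $1$ and the column of $z^{d-k}$ from the lower $d\times d$ block, which is itself lower-triangular-ish with the power sums $s_j$ below the diagonal and the constants $1,2,\dots,d$ on the diagonal. Thus $\det\CC(z) = \sum_{k=0}^{d} c_k\, z^{d-k}$ for certain constants $c_k$ depending only on the $s_j$, with $c_0$ equal to the product of all the diagonal entries $1\cdot 2\cdots d = d!$, giving the leading term $d!\,z^d$ and hence monicity of $\frac{1}{d!}\det\CC(z)$. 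It remains to check that $\frac{1}{d!}c_k = (-1)^k e_k$ for each $k$. The cleanest route is to observe that the coefficients $c_k$ satisfy the same recurrence as the Newton--Girard relations: expanding the relevant minor along its first column (the column of $s_j$'s) produces exactly the sum $c_k$ in terms of $c_{k-1},\dots,c_0$ and $s_1,\dots,s_k$, matching Newton's identity after dividing by $d!$. Alternatively, one can argue by induction on $d$, peeling off the last row and column.

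The main obstacle I anticipate is bookkeeping of signs and the off-diagonal constants $1,2,\dots,d$: the matrix $\CC(z)$ is not literally triangular, so the cofactor expansion of each minor must be done carefully to see that the diagonal constants $j$ appear with the right multiplicity to reproduce the factor $k$ in the term $(-1)^k k\, e_k$ of Newton's identity, and to confirm that the alternating signs of the cofactor expansion combine with the signs already present to yield $(-1)^k e_k$ rather than $e_k$. Once the pattern $\frac{1}{d!}\det\CC(z) = z^d - e_1 z^{d-1} + \cdots + (-1)^d e_d$ is verified for small $d$ and shown to propagate by the recurrence, the result follows, since the $z_i$ are by construction the roots of the right-hand side. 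A complete proof can also simply cite the classical fact that $\det\CC(z)/d!$ is the characteristic-type polynomial built from power sums (Gould \cite{go99}), in which case the above is the verification sketch.
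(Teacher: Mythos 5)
The paper offers no proof of this lemma at all --- it is quoted as a known result with a pointer to Gould \cite{go99} --- so your proposal is supplying an argument where the paper has none. Your outline is the standard one and it is correct: Newton's identities determine $e_1,\dots,e_d$ (hence the monic polynomial $\prod_i(z-z_i)$) uniquely from $s_1,\dots,s_d$, and the content of the lemma is the classical determinantal expression of that polynomial. Your two concrete claims check out: expanding $\det \CC(z)$ along the first row, the cofactor of $z^d$ is the lower-triangular minor with diagonal $1,2,\dots,d$, giving the leading coefficient $d!$; and the cofactor of $z^{d-k}$ does equal $(-1)^k d!\,e_k$. For the step you leave as a sketch, there is a structural observation that makes the bookkeeping you worry about essentially disappear: in the minor obtained by deleting the top row and the $k$-th power column, the rows indexed $1,\dots,k$ have support only in the first $k$ remaining columns, so the minor factors as $\det \M_k\cdot(k+1)(k+2)\cdots d$, where $\M_k$ is the $k\times k$ matrix with rows $(s_j,s_{j-1},\dots,s_1,j,0,\dots)$ for $j<k$ and last row $(s_k,\dots,s_1)$. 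Expanding $\det \M_k$ along its last row and invoking the same factorization for the resulting minors yields exactly the recurrence $\det \M_k=(k-1)!\sum_{j=0}^{k-1}(-1)^{k+j+1}s_{k-j}\,e_j=k!\,e_k$ by Newton's identity, which is the sign and constant check you anticipated. So the proposal is sound; written out with this factorization it becomes a complete proof rather than a verification sketch. (Minor slips only: the lower block is $d\times(d+1)$, not $d\times d$, and the $S_1$ in the displayed matrix is a typo for $s_1$.)
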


\section{The spectrum}
\label{sec:sp}
The following result allows us to compute the spectrum of a lifted digraphs from its associated matrix and the irreducible representations of the group.

\begin{theorem}
\label{theo-sp}
Let $\G=(V,E)$ be a base digraph on $r$ vertices, with a voltage assignment $\alpha$ in a group $G$ with $|G|=n$. Assume that $G$ has $\nu$ conjugacy classes with dimensions $d_1,\ldots,d_{\nu}$ $($so, $\sum_{i=1}^{\nu} d_i^2=n$$)$. Let $\vecrho_1,\ldots,\vecrho_{\nu}$ be the irreducible representations of the group $G$. Let $\vecrho_i(\B)$ the complex matrix obtained from $\B$ by replacing each $g\in G$ by the $d_i\times d_i$ matrix $\vecrho_i(g)$, and  let $\mu_{u,j}$, $u\in V$, $j\in[1,d_i]$ denote its eigenvalues. Then, the $rn$ eigenvalues of the lift $\G^{\alpha}$ are the $rd_i$ eigenvalues of $\vecrho_i(\B)$, for every $i\in [1,\nu]$, each repeated $d_i$ times.
\end{theorem}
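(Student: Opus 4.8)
The plan is to identify the characteristic polynomial of the adjacency matrix $\A$ of $\G^\alpha$ with $\prod_{i=1}^{\nu}\det\!\big(zI-\vecrho_i(\B)\big)^{d_i}$; equivalently, to show that the multiset of the $rn$ eigenvalues of $\A$ coincides with the multiset obtained by listing, for each $i$, the $rd_i$ eigenvalues of $\vecrho_i(\B)$ each with multiplicity $d_i$. By Lemma~\ref{gould} (equivalently, by Newton's identities), two multisets of $N$ complex numbers are equal as soon as their first $N$ power sums agree; taking $N=rn$, it therefore suffices to prove, for every integer $\ell\ge 1$,
$$
\tr(\A^\ell)\ =\ \sum_{i=1}^{\nu} d_i\,\tr\!\big(\vecrho_i(\B)^\ell\big).
$$
The dimension bookkeeping is consistent, since $\sum_{i}d_i\cdot(rd_i)=r\sum_i d_i^2=rn$.

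First I would evaluate the left-hand side combinatorially. The trace $\tr(\A^\ell)$ is the number of closed walks of length $\ell$ in $\G^\alpha$, summed over all $rn$ vertices. By Lemma~\ref{lema-walks}, the number of closed $\ell$-walks rooted at $(u,g)$ equals $a_\iota^{(\ell)}$, the coefficient of the identity in $(\B^\ell)_{uu}=\sum_{g}a_g^{(\ell)}g$, and this value does not depend on $g$. Writing $a_\iota^{(\ell)}(u)$ for this coefficient at vertex $u$, we get $\tr(\A^\ell)=n\sum_{u\in V}a_\iota^{(\ell)}(u)$.

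For the right-hand side, recall that each irreducible representation $\vecrho_i$ extends linearly to an algebra homomorphism $\C[G]\to M_{d_i}(\C)$, hence, applied entrywise, to a ring homomorphism $M_r(\C[G])\to M_{rd_i}(\C)$; in particular $\vecrho_i(\B)^\ell=\vecrho_i(\B^\ell)$. Taking traces and writing $\chi_i$ for the character of $\vecrho_i$, we obtain $\tr\!\big(\vecrho_i(\B)^\ell\big)=\sum_{u\in V}\sum_{g\in G}a_g^{(\ell)}(u)\,\chi_i(g)$. Multiplying by $d_i=\chi_i(\iota)$ and summing over $i$, the decomposition of the regular character, $\sum_{i=1}^{\nu}\chi_i(\iota)\chi_i(g)=\chi_{\mathrm{reg}}(g)$ — which equals $n$ for $g=\iota$ and $0$ otherwise — annihilates every term with $g\ne\iota$, leaving $\sum_i d_i\tr(\vecrho_i(\B)^\ell)=n\sum_{u}a_\iota^{(\ell)}(u)=\tr(\A^\ell)$, which is the required identity.

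The only genuinely substantive step is this final passage from ``all power sums agree'' to ``the spectra agree'', which is exactly the content of Lemma~\ref{gould}; everything else reduces to Lemma~\ref{lema-walks} together with the character relation $\chi_{\mathrm{reg}}=\sum_i d_i\chi_i$. An alternative, more structural route avoids power sums entirely: writing $\A=R(\B)$, where $R$ is the right regular representation of $G$ (so each $g$ appearing in $\B$ is replaced by the permutation matrix of right multiplication by $g$), the decomposition $R\cong\bigoplus_i d_i\vecrho_i$ furnishes a (Fourier) change of basis which, applied blockwise and followed by a permutation of coordinates reorganizing the tensor factors, conjugates $\A$ to the block-diagonal matrix $\bigoplus_{i}\big(I_{d_i}\otimes\vecrho_i(\B)\big)$; reading off its spectrum gives the statement directly. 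In this second approach the main care needed is in tracking the permutation similarities, whereas in the first the essential input is the walk-count of Lemma~\ref{lema-walks}.
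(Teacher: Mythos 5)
Your main argument --- establishing $\tr(\A^\ell)=\sum_{i=1}^{\nu} d_i\,\tr\big(\vecrho_i(\B)^\ell\big)$ for all $\ell$ via the walk-count Lemma~\ref{lema-walks} together with the regular-character relation $\sum_i d_i\chi_i(g)=n\delta_{g,\iota}$, and then invoking Lemma~\ref{gould} --- is exactly the power-sum argument used in the paper's proof, and it is correct and complete for the eigenvalue statement (the paper's additional explicit construction of eigenvectors $\vecphi_{(u,g)}=\vecrho_i(g)\x_u$ is extra information not needed for the multiset equality). The alternative route you sketch at the end, conjugating $\A=R(\B)$ into $\bigoplus_i\big(I_{d_i}\otimes\vecrho_i(\B)\big)$ via the decomposition of the regular representation, is a genuinely different and cleaner argument that would yield the block-diagonalization (hence also the eigenvectors) directly, but as written it remains a sketch rather than the proof you actually carry out.
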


\begin{proof}
Let $\A$ be the adjacency matrix of the lift.
First, we prove that, for every $u\in V$ and $j\in[1,d_i]$, every eigenvalue $\mu_{u,j}$ of $\vecrho_i(\B)$ gives rise to $d_i$  (equal) eigenvalue of $\A$. To this end, let $\U_i$ the $rd_i\times rd_i$ matrix whose columns are the eigenvectors of $\vecrho_i(\B)$.
Let $\D_i$ be the diagonal matrix having such eigenvalues as its entries. For every $u\in V$, let $\x_u$ be the $d_i\times rd_i$ submatrix of $\U_i$ having files indexed with $(u,j)$, $j\in[1, d_i]$.
Then, from $\vecrho_i(\B)\U_i=\U_i\D_i$ we have
\begin{equation}
\label{eq-ev-quotient}
\sum_{uv\in E}\vecrho_i(\alpha(uv))\x_v= \x_{u}\D_i\qquad \mbox{for $u\in V$}.
\end{equation}
Now for each vertex $(u,g)$ of $\G^{\alpha}$, consider the $d_i\times rd_i$ matrix
$$
\vecphi_{(u,g)}=\vecrho_i(g)\x_u.
$$
Then,
\begin{align*}
\vecphi_{(u,g)}\D_i &=\vecrho_i(g)\x_u\D_i= \vecrho_i(g)\sum_{uv\in E}\vecrho_i(\alpha(uv))\x_v\\
 &=
\sum_{uv\in E}\vecrho_i(g\alpha(uv))\x_v=
\sum_{uv\in E}\vecphi_{(v, g\alpha(uv)))}.
\end{align*}
But this means that, for every pair of fixed elements $k\in [1,d_i]$ and $(u,j)\in\{V\times [1,d_i]\}$, the vector obtained by taking the
 $(k,(u,j))$-entry of every matrix $\vecphi_{(v,h)}$, for $(v,h)\in V(\G^{\alpha})$, is an eigenvector of the lift $\G^{\alpha}$ with eigenvalue $\mu_{u,j}$. Since there are $d_i$ possible choices for $k$, the same holds for the eigenvectors of $\mu_{u,j}$.

Moreover, by Lemma \ref{lema-walks}, if
$(\B^{\ell})_{uu}=\sum_{g\in G}a_{(u,g)}^{(\ell)}g$,
the total number of rooted closed $\ell$-walks in $\G^{\alpha}$ is
$$
\tr (\A^\ell)=\sum_{\lambda\in \spec \G^{\alpha}} \lambda^\ell=n\sum_{u\in V} a_{(u,\iota)}^{(\ell)}
$$
since, in the lift, the number of $(u,g)$-rooted closed $\ell$-walks does not depend on $g\in G$.
Moreover, by the `Great Orthogonality Theorem' (see, e.g. \cite{b93}), we have $\sum_{g\in G}\vecrho_i(g)=\vec0$ for every $i\neq 1$ (with $\vecrho_1$ being the trivial representation).
Then,
$$
a_{(u,\iota)}^{(\ell)}=\frac{1}{n}\sum_{i=1}^{\nu} d_i(\vecrho_i(\B^{\ell}))_{uu}
$$
and, hence,
$$
\sum_{\lambda\in \spec \G^{\alpha}} \lambda^\ell=\tr (\A^\ell)=\sum_{i=1}^{\nu}d_i\tr (\vecrho_i(\B^{\ell}))
=\sum_{i=1}^{\nu}\sum_{\mu\in \spec \vecrho_i(\B^{\ell})} d_i\mu^{\ell}.
$$
(Note that, in the sum on the right, we have $r\sum_{i=1}^{\nu} d_i^2= rn$ terms, which is the number of eigenvalues of the adjacency matrix $\A$ of the lift $\G^{\alpha}$.)
As the above equality holds for every $\ell=1,2,\ldots$, by Lemma \ref{gould} both (multi)sets of eigenvalues must coincide.
\end{proof}

As a consequence, we have the following result in terms of the (irreducible) characters $\chi_i(g)$, for $g\in G$, of the group.
For stating it, let us introduce some additional notation:
Let $P_{\ell}$ be the set of closed walks of length $\ell\ge 1$ in $\Gamma$. If $p\in P_{\ell}$, say $u_0\rightarrow u_1\rightarrow \cdots \rightarrow u_{\ell-1}\rightarrow u_{\ell}(=u_0)$, let
$$
\chi_i(p)=\prod_{j=0}^{\ell-1}\chi_i(\alpha(u_ju_{j+1})).
$$

\begin{corollary}
\label{coro-chi}
Using the same notation as above, for each $i\in[1,\nu]$, the eigenvalues $\lambda_{u,j}$, for $u\in V$ and $j\in[1,d_i]$, of the lift $\G^{\alpha}$, are the solutions $($each repeated $d_i$ times$)$ of the system
\begin{equation}
\label{babai-gen}
\sum_{u\in V,j\in[1,d_i]}\lambda_{u,j}^{\ell}=\sum_{p\in P_{\ell}}\chi_i(p),\qquad \ell=1,\ldots, rd_i.
\end{equation}
\end{corollary}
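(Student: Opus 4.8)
The plan is to derive the corollary directly from Theorem~\ref{theo-sp} together with Lemma~\ref{gould}, essentially by rewriting the power-sum identity established in the proof of the theorem in terms of characters. First I would recall that, by Theorem~\ref{theo-sp}, the $rn$ eigenvalues of $\G^{\alpha}$ consist, for each $i\in[1,\nu]$, of the $rd_i$ eigenvalues $\lambda_{u,j}$ of $\vecrho_i(\B)$, each repeated $d_i$ times; so it suffices to show that, for fixed $i$, the multiset $\{\lambda_{u,j}: u\in V, j\in[1,d_i]\}$ is determined by the system \eqref{babai-gen}, and then invoke Lemma~\ref{gould} with $d=rd_i$ to conclude that the system has exactly this multiset as its solution (the number of equations, $\ell=1,\dots,rd_i$, is precisely what Lemma~\ref{gould} requires).

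The key computation is the identification of the right-hand side of \eqref{babai-gen}. I would start from $\sum_{u,j}\lambda_{u,j}^{\ell}=\tr(\vecrho_i(\B)^{\ell})=\tr(\vecrho_i(\B^{\ell}))$, using that $\vecrho_i$ is an algebra homomorphism from $\C[G]$ to matrices, so it commutes with taking powers and the block-trace is additive. Next, writing $(\B^{\ell})_{uu}=\sum_{g\in G}a_{(u,g)}^{(\ell)}g$, we get $\tr(\vecrho_i(\B^{\ell}))=\sum_{u\in V}\sum_{g\in G}a_{(u,g)}^{(\ell)}\chi_i(g)$. Now I expand $(\B^{\ell})_{uu}$ combinatorially: a diagonal entry of $\B^{\ell}$ is a sum, over all closed walks $p: u=u_0\to u_1\to\cdots\to u_\ell=u$ in $\G$, of the single group element $\alpha(u_0u_1)\alpha(u_1u_2)\cdots\alpha(u_{\ell-1}u_\ell)$; hence $\sum_{g}a_{(u,g)}^{(\ell)}\chi_i(g)=\sum_{p}\chi_i\!\big(\prod_{j}\alpha(u_ju_{j+1})\big)$. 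Finally, since $\chi_i$ is a character, it is a trace of a representation and therefore multiplicative-under-representation in the sense that $\chi_i(g_1\cdots g_\ell)=\tr(\vecrho_i(g_1)\cdots\vecrho_i(g_\ell))$, but what we actually want is the stated definition $\chi_i(p)=\prod_{j}\chi_i(\alpha(u_ju_{j+1}))$ — so here I would need to be slightly careful: the natural quantity coming out of the trace is $\tr\big(\prod_j\vecrho_i(\alpha(u_ju_{j+1}))\big)$, not the product of scalar characters. The cleanest route is therefore to interpret $\chi_i(p)$ in \eqref{babai-gen} as this trace of a product of matrices (which for $\ell=1$, or for abelian $G$, reduces to the literal product of scalars), and to note that summing over all of $V$ simply collects the diagonal blocks, giving $\sum_{p\in P_\ell}\chi_i(p)=\tr(\vecrho_i(\B^{\ell}))$ as desired.

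Putting these together yields $\sum_{u\in V,\,j\in[1,d_i]}\lambda_{u,j}^{\ell}=\sum_{p\in P_\ell}\chi_i(p)$ for all $\ell\ge 1$, and in particular for $\ell=1,\dots,rd_i$; Lemma~\ref{gould} then guarantees that the $\lambda_{u,j}$ are exactly the roots of the associated monic polynomial $p(z)=\frac{1}{(rd_i)!}\det\CC(z)$, i.e.\ they are the unique solution of the system \eqref{babai-gen}. Combining over all $i$, with the multiplicities $d_i$ supplied by Theorem~\ref{theo-sp}, gives the full spectrum of $\G^{\alpha}$.

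The main obstacle, as indicated above, is purely notational rather than mathematical: one must pin down precisely what $\chi_i(p)$ means for a walk of length $\ell\ge 2$. If it is read literally as $\prod_{j}\chi_i(\alpha(u_ju_{j+1}))$, the identity fails for nonabelian $G$, and one should instead read it as $\tr\big(\prod_j\vecrho_i(\alpha(u_ju_{j+1}))\big)$ — equivalently, $\chi_i$ applied to the single group element that is the ordered product of the voltages along $p$. Once this convention is fixed, the rest is the routine expansion of matrix powers and an appeal to Lemma~\ref{gould}, exactly as in the proof of Theorem~\ref{theo-sp}.
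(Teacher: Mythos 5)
Your proof is correct and follows essentially the same route as the paper: identify the left-hand side with $\tr(\vecrho_i(\B)^{\ell})=\tr(\vecrho_i(\B^{\ell}))$ via Theorem~\ref{theo-sp}, expand $(\B^{\ell})_{uu}$ over closed walks, and invoke Lemma~\ref{gould} with $d=rd_i$. Your caveat about the meaning of $\chi_i(p)$ is well founded: the paper's own proof computes $\chi_i((\B^{\ell})_{uu})$, i.e.\ the linear extension of $\chi_i$ applied to the single group element obtained as the ordered product of the voltages along each walk, which is exactly the reading you propose, whereas the displayed formula $\chi_i(p)=\prod_{j}\chi_i(\alpha(u_ju_{j+1}))$ is only equivalent to it when $d_i=1$ (in particular for abelian $G$).
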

\begin{proof}
By Theorem  \ref{theo-sp}, the above left sum of the powers is $\tr(\vecrho_i(\B)^{\ell})$, whereas the  right expression  corresponds to
$\chi_i(\tr(\B^{\ell}))$ (where the $\ell$-power of $\B$ and its trace is computed in $\C[G]$, and $\chi_i(\sum_{g\in G}a_g g)=\sum_{g\in G}a_g\chi_i(g)$). Then, the result follows from
$$
\tr(\vecrho_i(\B)^{\ell})=\sum_{u\in V}\tr[\vecrho_i((\B^{\ell})_{uu})]=\sum_{u\in V}\chi_i((\B^{\ell})_{uu})=\tr(\chi_i(\B^{\ell})).
$$
\end{proof}
Notice that, by Lemma \ref{gould}, the equalities in \eqref{babai-gen} lead to a polynomial of degree $rd_i$, with roots the required eigenvalues $\lambda_{u,j}$.

As commented in Section \ref{sec:voltage}, when $\Gamma$ consists of one vertex with loops, then $\G^{\alpha}$ is a Cayley digraph and \eqref{babai-gen} gives the result of Babai \cite{ba79}.
Another extreme case is when $d_i=1$ for some $i$. Then,  we simply have $\lambda_{u,1}=\mu_{u,1}$ for every $u\in V$. For instance,  when $G$ is Abelian, $\nu=n$, and this holds for every $i\in[1,n]$. This case was dealt with by the authors, Miller, and Ryan in \cite{dfmrs17}.


\subsection{An example}
Let us consider again the lift described in Subsection \ref{sec:voltage} and shown Figure \ref{fig1}. Then, the base graph $K_2^*$ has voltages on the symmetric group $S_3\cong D_3=\langle \rho, \sigma : \rho^3=\sigma^2=(\rho\sigma)^2=\iota\rangle$, with characters shown in Table \ref{charac-table-S3}, and associated matrix
$$
\B=
\left(
\begin{array}{cc}
\sigma & \iota+\rho\\
\iota+\rho & \sigma
\end{array}
\right).
$$
Note that the edge (two opposite arcs forming a digon) of the base digraph gives rise to the entries $\iota$'s for the voltages assigned to the corresponding arcs.

\begin{table}[h]
\centering
\begin{tabular}{|c||c|c|c|}
\hline
$S_3\quad \backslash\quad g$  & $\iota$  & $\sigma,\sigma\rho,\sigma\rho^2$ & $\rho,\rho^2$ \\
\hline\hline
$\chi_1$ $(d_1=1)$  &  $1$  & $1$   &  $1$    \\
\hline
$\chi_2$ $(d_2=1)$  &  $1$  & $-1$   &  $1$   \\
\hline
$\chi_3$ $(d_3=2)$  &  $2$  & $0$   &  $-1$    \\
\hline
\end{tabular}
\caption{The character table of the symmetric group $S_3$.}
\label{charac-table-S3}
\end{table}

The obtained lifted digraph $\G^{\alpha}$  has spectrum
\begin{equation}
\label{sp-example}
\spec \G^{\alpha}=\{3^{(1)},1^{(3)},0^{(4)},-1^{(3)},-3^{(1)}\}.
\end{equation}

\begin{figure}[h]
\begin{center}
\includegraphics[width=12cm]{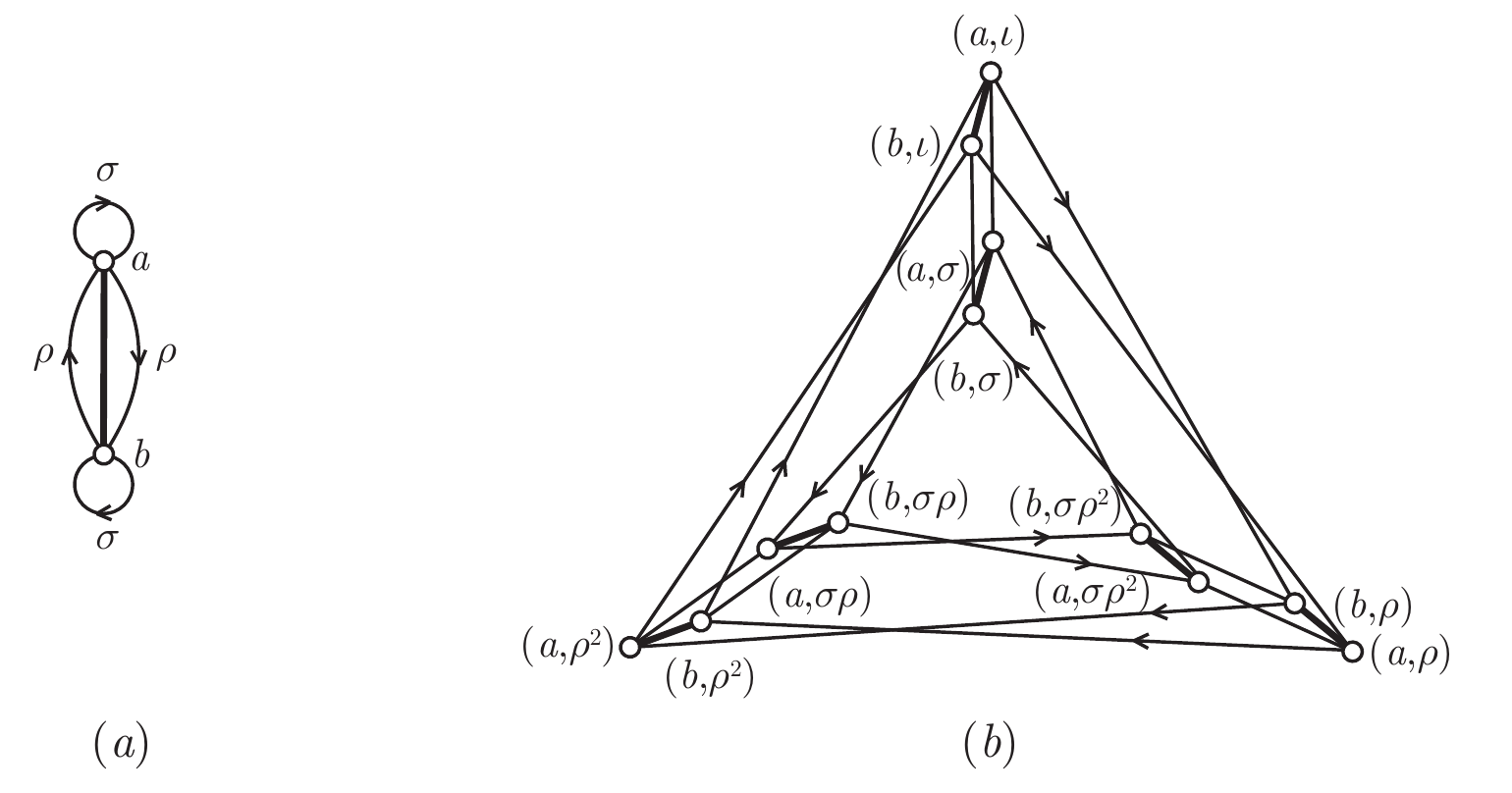}
\end{center}
\vskip -.5cm
\caption{The base digraph $K_2^*$, on the group $S_3$, and its lift}
  \label{fig1}
\end{figure}

Then, we get the complex matrices
$$
\chi_1(\B)=
\left(
\begin{array}{cc}
1 & 2\\
2 & 1
\end{array}
\right),\qquad
\chi_2(\B)=
\left(
\begin{array}{rr}
-1 & 2\\
2 & -1
\end{array}
\right),\qquad
\chi_3(\B)=
\left(
\begin{array}{cc}
0 & 1\\
1 & 0
\end{array}
\right).
$$
Then, by Corollary \ref{coro-chi}:
\begin{itemize}
\item $\chi_1$:
Since $d_1=1$, two  eigenvalues of $\G^{\alpha}$ are
$$
\{3,-1\}=\ev \chi_1(\B).
$$
\item $\chi_2$:
Since $d_2=1$, two  eigenvalues of $\G^{\alpha}$ are
$$
\{-3,1\}=\ev \chi_2(\B).
$$
\item $\chi_3$:
Since $d_3=2$, we consider all the possible closed walks of lengths $\ell=1,2,3,4$ in $\B$, which gives the system
\begin{align*}
\lambda_{u,0}+\lambda_{u,1}+\lambda_{v,0}+\lambda_{v,1} &=0      \\
\lambda_{u,0}^2+\lambda_{u,1}^2+\lambda_{v,0}^2+\lambda_{v,1}^2 &=2\\
\lambda_{u,0}^3+\lambda_{u,1}^3+\lambda_{v,0}^3+\lambda_{v,1}^3 &=0\\
\lambda_{u,0}^4+\lambda_{u,1}^4+\lambda_{v,0}^4+\lambda_{v,1}^4 &=2,
\end{align*}
with solutions $1,0,0,-1$
%
\end{itemize}
Then, as these last eigenvalues have two be considered twice, this completes the  eigenvalue multiset of $\G^{\alpha}$, in agreement with \eqref{sp-example}.



\end{document}